\title[Short intervals with a given number of primes]
      {Short intervals with a given number of primes}
\author[T. Freiberg]{Tristan Freiberg}
\address{Department of Pure Mathematics, 
         University of Waterloo, 
         Waterloo, ON, CANADA.}
\email{tfreiberg@uwaterloo.ca}
\date{\today}
\begin{document}


\begin{abstract}
A well-known conjecture asserts that, for any given positive real 
number $\lambda$ and nonnegative integer $m$, the proportion of 
positive integers \linebreak $n \le x$ for which the interval 
$(n,n + \lambda\log n]$ contains exactly $m$ primes is 
asymptotically equal to $\lambda^m\e^{-\lambda}/m!$ as $x$ tends 
to infinity.
We show that the number of such $n$ is at least $x^{1 - o(1)}$.
\end{abstract}

\maketitle


\section{Introduction}
 \label{sec:1}

Let $\pi(x) \defeq \#\{\text{$p \le x$ : $p$ prime}\}$ denote the 
prime counting function.
One form of the prime number theorem states that, for any given 
positive real number $\lambda$, 
\[
 \frac{1}{x}
  \sum_{n \le x}
   \br{\pi(n + \lambda\log n) - \pi(n)}
    \sim 
     \lambda
      \quad 
       (x \to \infty),
\]
i.e.\ on average over $n \le x$, the interval 
$(n,n + \lambda\log n]$ contains approximately $\lambda$ primes.
As to the finer questions pertaining to the distribution of 
primes, we have little more than conjecture in the way of 
answers.
Heuristics based on Cram\'er's model suggest that, for any given 
positive real number $\lambda$ and  nonnegative integer $m$,  
\begin{equation}
 \label{eq:1.1}
   \#\{n \le x : \pi(n + \lambda \log n) - \pi(n) = m\}
    \sim
     x\lambda^m\e^{-\lambda}/m!
      \quad 
       (x \to \infty).
\end{equation}
However, before the groundbreaking work \cite{GPY} of Goldston, 
Pintz and Y{\i}ld{\i}r{\i}m (GPY), it had not even been 
established that
\begin{equation}
 \label{eq:1.2}
 \pi(n + \lambda \log n) - \pi(n) \ge m
\end{equation}
holds for infinitely many $n$ when $\lambda = 1/5$ (for instance) 
and $m = 2$.

What GPY showed is that, for arbitrarily small $\lambda$ and 
$m = 2$, \eqref{eq:1.2} holds for infinitely many $n$.
Only very recently has the breakthrough of Maynard \cite{MAY} on 
bounded gaps between primes shown that, for every choice of 
$\lambda$ and $m$, \eqref{eq:1.2} holds for infinitely many $n$.
This statement does not preclude the possibility that there are 
choices of $\lambda$ and $m$ for which
$\pi(n + \lambda \log n) - \pi(n) = m$ for at most finitely many 
$n$.
The purpose of this note is to establish the following.

\begin{theorem}
 \label{thm:1.1}
Fix any positive real number $\lambda$ and any nonnegative integer 
$m$.
If $x$ is sufficiently large in terms of $\lambda$ and $m$, then 
\begin{equation}
 \label{eq:1.3}
 \#\{n \le x : \pi(n + \lambda\log n) - \pi(n) = m\}
  \ge 
   x^{1 - \varepsilon(x)},
\end{equation}
where $\varepsilon(x)$ is a certain function that tends to zero as 
$x$ tends to infinity.
\end{theorem}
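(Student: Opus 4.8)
Write $f(n)=\pi(n+\lambda\log n)-\pi(n)$, so that we must bound $\#\{n\le x:f(n)=m\}$ from below. The plan is to combine three ingredients: the Maynard--Tao sieve, to force at least $m$ primes in a bounded cluster for many $n$; the Erd\H{o}s--Rankin method, to place a prime-free interval of length $\asymp\lambda\log x$ immediately to the left of that cluster, at the cost of confining $n$ to a single residue class to a modulus that is only $x^{o(1)}$; and an elementary ``sliding window'' argument, to pass from ``at least $m$ primes'' to ``exactly $m$ primes'' with negligible loss. The first step is the reduction afforded by the sliding argument.

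Note that for $n$ in a dyadic block the quantity $\lfloor\lambda\log n\rfloor$ is constant except at $O_\lambda(1)$ exceptional points, and that for a non-exceptional $n$ that is itself composite one has $f(n-1)=f(n)-\mathbf 1_{\mathbb P}\!\left(n+\lfloor\lambda\log n\rfloor\right)$, since the window loses its top element (composite) and gains its bottom element (also composite). Consequently, suppose $n\le x$ is composite, $(n-\lambda\log n-C,n]$ contains no primes for a suitable constant $C$, and $(n,n+\lambda\log n]$ contains at least $m$ primes. Letting $n'$ decrease from $n$ to $n-\lfloor\lambda\log n\rfloor$, the value $f(n')$ is non-increasing in steps of size at most $1$; it is $\ge m$ at $n'=n$ and equals $0$ at $n'=n-\lfloor\lambda\log n\rfloor$ (the window then lies inside the prime-free interval), so $f(n')=m$ for some $n'\in[n-\lfloor\lambda\log n\rfloor,\,n]$. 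Since each $n'$ is produced by at most $\lfloor\lambda\log x\rfloor+1\ll_\lambda\log x$ integers $n$, it suffices to produce a set $\mathcal A\subseteq\{n\le x\}$ with $|\mathcal A|\ge x^{1-o(1)}$ such that every $n\in\mathcal A$ is composite, has $(n-\lambda\log n-C,n]$ prime-free, and has at least $m$ primes in $(n,n+\lambda\log n]$.

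To construct $\mathcal A$, I would first apply the Erd\H{o}s--Rankin method: there is a modulus $W=\prod_{p\le y}p$ with $y=y(x)\to\infty$ slowly enough that $W=e^{(1+o(1))y}=x^{o(1)}$, together with a residue class $a\bmod W$, such that every integer in $(n-J,n]$ is composite whenever $n\equiv a\bmod W$, where $J=\lceil\lambda\log x\rceil+C$; indeed an interval of length $\asymp\lambda\log x$ can be emptied of primes using only the primes up to such a $y$. In particular every such $n$ is composite and has $(n-\lambda\log n-C,n]$ prime-free. Next, fix $m$; Maynard's theorem supplies an integer $k=k(m)$ and an admissible $k$-tuple $\mathcal H=\{h_1<\dots<h_k\}$ of bounded diameter $D=D(m)$, with $1\le h_1$, to which the Maynard--Tao sieve applies with parameter $m$. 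I would choose $\mathcal H$ and the Erd\H{o}s--Rankin sieving residues compatibly so that $(a+h_i,W)=1$ for all $i$ (see below), and then run the Maynard--Tao sieve over $n\in(x/2,x]$ with $n\equiv a\bmod W$. This yields $\gg_m (x/W)(\log x)^{-O_m(1)}=x^{1-o(1)}$ integers $n$ with at least $m$ of the $n+h_i$ prime; since $1\le h_i\le D\le\lambda\log n$ for $x$ large, these primes all lie in $(n,n+\lambda\log n]$, and $n$ automatically inherits the Erd\H{o}s--Rankin prime-free interval. Hence $|\mathcal A|\ge x^{1-o(1)}$, and feeding this into the reduction gives $\#\{n\le x:f(n)=m\}\ge x^{1-o(1)}$, which is the assertion with a suitable $\varepsilon(x)$. (The case $m=0$ is handled the same way, and more simply, with no cluster needed.)

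The one delicate point is the compatibility demanded in the construction: the single class $a\bmod W$ must realise the Erd\H{o}s--Rankin prime-free interval while keeping $a+h_i$ coprime to $W$ for each of the $k$ elements $h_i$ of $\mathcal H$. Because $\mathcal H$ is thin --- only $k=k(m)$ residues to avoid modulo each prime --- for every prime $p>k$ there is ample freedom to choose $n\bmod p$ so as both to sieve out an Erd\H{o}s--Rankin survivor and to dodge the forbidden classes $-h_i\bmod p$; the finitely many primes $p\le k$ are accommodated by taking $\mathcal H$ to omit $0\bmod p$ while the Erd\H{o}s--Rankin construction uses $n\equiv0\bmod p$ there. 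This is essentially the bookkeeping used in constructing large gaps between primes in arithmetic progressions, and I expect it to go through with routine modifications. The remaining ingredients --- checking that the Maynard--Tao lower bound survives with a modulus as large as $x^{o(1)}$, and the elementary sliding estimate --- are comparatively straightforward.
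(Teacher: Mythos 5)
Your strategy is in essence the one the paper uses: run Maynard's quantitative sieve on linear forms $gn+h_i$ along a progression to a modulus $g=x^{o(1)}$, use an Erd\H{o}s--Rankin construction to make the prime environment around the cluster controlled, and then slide a window of length $\lambda\log n$ to catch the value exactly $m$. So this is not a genuinely different route. But two of the places you wave off as ``routine'' are exactly where the paper has to work, and as written your argument has a small hole.

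First, the sliding step. You slide the left endpoint \emph{down}, and assert $f(n')$ is non-increasing in steps of size at most $1$. This is only true away from the exceptional points where $\lfloor\lambda\log n'\rfloor$ drops, and at such a point $f$ can drop by $2$, so it can skip the value $m$. You flag that there are $O_\lambda(1)$ exceptional points per dyadic block, which is enough to repair the argument (discard the $O_\lambda(1)$ progressions $n\equiv a\pmod W$ whose slide range meets an exceptional point), but the repair is not made. The paper avoids the issue entirely by sliding \emph{right}: with $\cI_j=(N_j,N_j+\lambda\log N_j]$ and $N_j=gn+h+j$, increasing $j$ moves the left endpoint up by $1$ and the right endpoint up by slightly more than $1$, so the count decreases by at most $1$ per step with no exceptional cases. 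To make the right-slide terminate at zero primes, the paper does not build a prime-free gap to one side; instead it uses the Erd\H{o}s--Rankin lemma from \cite{BFM} to produce an admissible set $\{h_1,\dots,h_k\}$ that is \emph{exactly} the set of sieve survivors in $[(1+3\lambda)\log x]$, so that $(gn+h,gn+h+(1+3\lambda)\log x]$ contains no primes other than possibly $L_1(n),\dots,L_k(n)$; these $h_i$ have diameter close to $\lambda\log x$ (not a bounded tuple), and this is what makes both endpoints of the slide clean.

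Second, and more substantively, you describe verifying Hypothesis (the level-of-distribution input to Maynard's theorem) with a modulus $W=x^{o(1)}$ as ``comparatively straightforward.'' It is not: this is where the paper does most of its work (Lemma \ref{lem:3.2} and all of \S\ref{sec:4}). A fixed modulus of size $x^{o(1)}$ built from all small primes could, a priori, carry a Siegel zero that wrecks the required Bombieri--Vinogradov estimate for the forms $gn+h_i$. The paper handles this by invoking a Landau--Page/Deuring--Heilbronn-type statement (Lemma \ref{lem:4.1}, quoted from \cite{BFM}) to isolate a single possibly exceptional prime $B$, excising it from $g$ and from the moduli $q$ in the BV sum, and then running a zero-density argument (Montgomery's Theorem 12.2) in the remaining range. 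Your sketch has no analogue of the parameter $B$, so as written the level-of-distribution step would not go through without this additional mechanism.
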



\subsection*{Notation}
 
Throughout, $\PP$ denotes the set of all primes, 
$\ind{\PP} : \NN \to \{0,1\}$ the indicator function of 
$\PP \subseteq \NN \defeq \{1,2,\ldots\}$ and $p$ a prime. 
Given $a,q \in \ZZ$, $a \pod{q}$ denotes the residue class 
$\{a + qb : b \in \ZZ\}$ (thus, $n \equiv a \pod{q}$ if and only 
if $n \in a \pod{q}$). 
Given a large real number $x$, $\log_2 x \defeq \log\log x$, 
$\log_3 x \defeq \log\log\log x$ and so on.
By $o(1)$ we mean a quantity that tends to $0$ as $x$ tends to 
infinity.
Expressions of the form $A = O(B)$, $A \ll B$ and $B \gg A$ 
denote that $|A| \le c|B|$, where $c$ is some positive constant 
(absolute unless stated otherwise); $A \asymp B$ is and 
abbreviation for $A \ll B \ll A$.
Further notation is introduced in situ.
\qed


\section{Background}
 \label{sec:2}
 
According to Cram\'er's model,%
\footnote{%
For details, we highly recommend the insightful expository 
article \cite{SOUND} of Soundararajan. 
}
the sequence $(\ind{\PP}(n))_{n \le x}$, when $x$ is large, 
behaves roughly like a sequence $(X_n)_{n \le x}$ of Bernoulli 
random variables for which $X_n = 1$ with probability $1/\log x$ 
and $X_n = 0$ with probability $1 - 1/\log x$. 
Thus, $x^{-1}\#\{n \le x : \pi(x + h) - \pi(x) = m\}$ is to be 
thought of as the probability that $X_1 + \cdots + X_h = m$. 
Letting $x$ and $h$ tend to infinity in such a way that 
$h/\log x \sim \lambda$, we get (in the limit) a Poisson 
distribution for the sum $X_1 + \cdots + X_h$. 
Hence the conjectured asymptotic \eqref{eq:1.1}, which, as was 
shown by Gallagher \cite[Theorem 1]{GAL}, would in fact follow 
from a certain uniform version of the Hardy--Littlewood prime 
tuples conjecture.

We similarly expect the normalized spacings between consecutive 
primes to follow an exponential distribution, i.e.\ if 
$d_n \defeq p_{n+1} - p_n$, where $p_n$ denotes the $n$th 
smallest prime, we have the well-known conjecture  
\[
 \frac{1}{x}
  \#\{ n \le x : d_n/\log n \in (a,b] \}
    \sim
     \int_a^b
      \e^{-t}
       \dd t
        \quad 
         (x \to \infty).
\]
(Another form of the prime number theorem states that 
$x^{-1} \sum_{n \le x} d_n/\log n \sim 1$ as $x$ tends to 
infinity, i.e.\ $d_n/\log n \approx 1$ on average over 
$n \le x$.)
However, we do not even know of any specific limit points 
of the sequence $(d_n/\log n)$, except for $0$ and $\infty$ (the 
former following from the aforementioned result of GPY, the 
latter from an old result of Westzynthius \cite{WES}).

Nevertheless, it has recently been shown%
\footnote{%
Benatar \cite[Proposition 1.2]{BEN} has since shown that the 
method of \cite{BFM} in fact yields that $25\%$ of positive real 
numbers are limit points of the sequence 
$(d_n/\log n)$.
}
\cite[Theorem 1.1]{BFM} that, in a certain sense, $12.5\%$ of 
positive real numbers are limit points of $(d_n/\log n)$. 
This note may be regarded as a continuation of \cite{BFM}, the 
results of which are utilized here in combination with the very 
general and powerful quantitative work of Maynard \cite{MAY2}.  


\section{Proof of Theorem \ref{thm:1.1}}
 \label{sec:3}

We shall consider linear functions $L$ given by $L(n) = gn + h$, 
where $g,h \in \ZZ$ (it is to be assumed that $g \ne 0$).
A finite set $\{L_1,\ldots,L_k\}$ of linear functions is  
{\em admissible} if the set of solutions modulo $p$ to 
$L_1(n)\cdots L_k(n) \equiv 0 \pod{p}$ does not form a complete 
residue system modulo $p$, for any prime $p$.
(It is to be assumed that 
$
 \prod_{1 \le i < j \le k}(g_ih_j - g_jh_i) \ne 0
$, 
i.e.\ the linear forms in $\cL$ are distinct.)
For a linear function $L$ given by $L(n) = gn + h$, and a 
positive integer $q$, let 
$\phi_L(q) \defeq \phi(|g|q)/\phi(|g|)$, where $\phi$ denotes 
the Euler function.
 
\subsection*{Maynard's theorem}

We quote a special case of \cite[Theorem 3.1]{MAY2}.

\begin{hypothesis}[$\cL,B,x,\theta$]
 \label{hyp:1}
Let $\cL$ be an admissible set of $k$ linear functions.
Let $B$ be a positive integer, let $x$ be a large real number and 
let $0 < \theta < 1$.
For each $L \in \cL$,  
\begin{equation}
 \label{eq:3.1}
 \sums[q \le x^{\theta}][(q,B) = 1]
  \max_{(L(a),q) = 1}
   \bigg|
         \sums[n \in [x,2x)][n \equiv a \pod{q}]
          \ind{\PP}(L(n))
         -
           \frac{1}{\phi_L(q)}
            \sum_{n \in [x,2x)}
             \ind{\PP}(L(n))
   \bigg|
    \ll
      \frac{\sum_{n \in [x,2x)} \ind{\PP}(L(n))}
           {(\log x)^{100k^2}}.
\end{equation}
\end{hypothesis}

\begin{theorem}[Maynard]
 \label{thm:3.1}
Let $\cL = \{L_1,\ldots,L_k\}$ be an admissible set of $k$ linear 
functions. 
Let $B$ be a positive integer, let $x$ be a large real number and 
let $0 < \theta < 1$.
Let $\alpha > 0$.
Suppose that the coefficients of 
$L_i(n) \defeq g_in + h_i \in \cL$ satisfy 
$1 \le g_i,h_i \le x^{\alpha}$ for $i = 1,\ldots,k$, 
that $k \le (\log x)^{\alpha}$ and that $1 \le B \le x^{\alpha}$.
There is a positive constant $C$, depending only on $\theta$ 
and $\alpha$, such that the following holds.
If $k \ge C$, if $\cL,B,x,\theta$ satisfy 
Hypothesis \ref{hyp:1} and if $\delta > (\log k)^{-1}$ is such 
that 
\begin{equation}
 \label{eq:3.2}
  \frac{1}{k}
   \frac{\phi(B)}{B}
    \sum_{L \in \cL}
     \frac{\phi(g_i)}{g_i}
      \sum_{n \in [x,2x)} \ind{\PP}(L(n))
       \ge 
        \delta 
         \frac{x}{\log x},
\end{equation}
then 
\begin{equation}
 \label{eq:3.3}
 \#
  \Br{
       n \in [x,2x) : 
       \#(\{L_1(n),\ldots,L_k(n)\} \cap \PP) 
           \ge C^{-1}\delta\log k
     }
   \gg
    \frac{x}{(\e^C\log x)^k}.
\end{equation}
\textup{(}%
The implicit constant in \eqref{eq:3.1} may depend only on 
$\theta$ and $\alpha$, and that in \eqref{eq:3.3} 
depends at most on $\theta$ and $\alpha$.%
\textup{)}
\end{theorem}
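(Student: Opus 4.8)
The plan is to run the Maynard--Tao sieve and, rather than merely producing one good $n$, to extract a count of size $x^{1-o(1)}$ by a fourth-moment Cauchy--Schwarz argument. Put $R \defeq x^{\theta'}$ with $\theta' \defeq \min\{\theta/2,\tfrac15\}$, so that $R^{4}<x$ and $R^{2}\le x^{\theta}$, and for a smooth $F\colon[0,\infty)^{k}\to[0,\infty)$ supported on the simplex $\{\mathbf t:\sum_{i}t_{i}\le1\}$ let $\lambda_{d_{1},\ldots,d_{k}}$ be the Maynard coefficients --- supported on squarefree $d_{1}\cdots d_{k}<R$ with $(d_{1}\cdots d_{k},B)=1$ and $(d_{i},d_{j})=1$ for $i\ne j$ --- obtained from $y_{r_{1},\ldots,r_{k}}\defeq F\bigl(\tfrac{\log r_{1}}{\log R},\ldots,\tfrac{\log r_{k}}{\log R}\bigr)$ by the standard change of variables. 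Set
\[
 w_{n}\defeq\Biggl(\ \sum_{\substack{d_{1},\ldots,d_{k}\\ d_{i}\mid L_{i}(n)\ \forall i}}\lambda_{d_{1},\ldots,d_{k}}\Biggr)^{2}\ \ge\ 0,\qquad \nu(n)\defeq\sum_{m=1}^{k}\ind{\PP}(L_{m}(n)),
\]
so that (since distinct forms agree at a given $n$ for at most $\binom{k}{2}$ values of $n$) $\nu(n)=\#(\{L_{1}(n),\ldots,L_{k}(n)\}\cap\PP)$ outside a negligible set, which I ignore. With $\rho\defeq C^{-1}\delta\log k$, $\mathcal N$ the set in \eqref{eq:3.3}, and
\[
 S_{1}\defeq\sum_{x\le n<2x}w_{n},\qquad S_{2}\defeq\sum_{x\le n<2x}\nu(n)w_{n},\qquad S_{3}\defeq\sum_{x\le n<2x}w_{n}^{2},
\]
the bounds $w_{n}\ge0$ and $\nu(n)<\rho$ for $n\notin\mathcal N$ give $S_{2}-\rho S_{1}\le k\sum_{n\in\mathcal N}w_{n}\le k(\#\mathcal N)^{1/2}S_{3}^{1/2}$ by Cauchy--Schwarz. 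Thus the theorem follows once I establish, with implied constants depending only on $\theta$ and $\alpha$,
\[
 \text{(i)}\quad S_{2}-\rho S_{1}\ \gg\ \delta(\log k)\,S_{1}\qquad\text{and}\qquad\text{(ii)}\quad S_{3}\ \ll\ \e^{O(k)}\,S_{1}^{2}/x,
\]
since then $\#\mathcal N\gg(\delta\log k)^{2}S_{1}^{2}/(k^{2}S_{3})\gg\e^{-O(k)}x/k^{2}\gg x/(\e^{C}\log x)^{k}$ once $C$ is large and $x$ is large in terms of $\theta,\alpha$ (using $\delta\log k>1$ and $k\le(\log x)^{\alpha}$).

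First I would compute the three sums. For $S_{1}$ and $S_{3}$ this is elementary: opening the square(s) and interchanging summations, the inner count of $n\in[x,2x)$ in a fixed class modulo $\prod_{i}[d_{i},e_{i}]$ (respectively $\prod_{i}[d_{i}^{(1)},\ldots,d_{i}^{(4)}]$) is $x/(\text{modulus})+O(1)$, no equidistribution being required since these moduli are $<R^{2}$, respectively $<R^{4}<x$; the $O(1)$'s accumulate to $o$ of the main term because $R^{4}<x$ while $k\le(\log x)^{\alpha}$ and $1\le g_{i},h_{i},B\le x^{\alpha}$ bound the number of tuples and the size of $\lambda$. Diagonalising the main terms as in \cite{GPY} and \cite{MAY} gives $S_{1}=(1+o(1))\,c(\cL,B)\,x\,(\log R)^{k}I_{k}(F)$ with $I_{k}(F)=\int F^{2}$ and $c(\cL,B)>0$ collecting the singular series of $\cL$ (positive, by admissibility) together with $\phi(B)/B$ and $\phi(g_{i})/g_{i}$, while a deeper diagonalisation of the four-fold product yields (ii). The sum $S_{2}=\sum_{m=1}^{k}\sum_{n}\ind{\PP}(L_{m}(n))w_{n}$ is the only place Hypothesis~\ref{hyp:1} is needed: after opening and interchanging, the inner sum $\sum_{\substack{x\le n<2x\\ n\equiv a\pod{q}}}\ind{\PP}(L_{m}(n))$ has modulus $q<R^{2}\le x^{\theta}$ and is replaced by $\phi_{L_{m}}(q)^{-1}\sum_{n}\ind{\PP}(L_{m}(n))$ at the cost of an error $\ll(\log R)^{2k}\sum_{q\le x^{\theta}}\mu^{2}(q)\tau_{3k}(q)\max_{(a,q)=1}|\Delta_{m}(q)|$, where $\Delta_{m}(q)$ is the discrepancy in \eqref{eq:3.1}; by Cauchy--Schwarz, Brun--Titchmarsh ($\max|\Delta_{m}(q)|\ll x/(\phi(q)\log x)$), the bound $\sum_{q\le x^{\theta}}\mu^{2}(q)\tau_{3k}(q)^{2}/\phi(q)\ll(\log x)^{9k^{2}}$, and \eqref{eq:3.1} itself, this error is $\ll\bigl(\tfrac{x}{\log x}(\log x)^{9k^{2}}\bigr)^{1/2}\bigl(\tfrac{x/\log x}{(\log x)^{100k^{2}}}\bigr)^{1/2}(\log x)^{2k}=o\bigl(\tfrac{x}{\log x}(\log R)^{k-1}\bigr)$ --- which is exactly why the exponent $100k^{2}$ is built into \eqref{eq:3.1}. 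Diagonalising the main term of $S_{2}$ as in \cite{MAY} completes the evaluation.

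To combine, take $F$ symmetric, so that $J_{k}^{(m)}(F)=J_{k}(F)\defeq\int_{[0,1]^{k-1}}\bigl(\int_{0}^{1}F\,\dd t_{k}\bigr)^{2}\dd t_{1}\cdots\dd t_{k-1}$ for every $m$. Comparing the (standard) main-term asymptotics for $S_{1}$ and $S_{2}=\sum_{m}S_{2}^{(m)}$ and inserting \eqref{eq:3.2} in the shape $\tfrac1k\tfrac{\phi(B)}{B}\sum_{m}\tfrac{\phi(g_{m})}{g_{m}}\sum_{n}\ind{\PP}(L_{m}(n))\ge\delta x/\log x$ gives
\[
 \frac{S_{2}}{S_{1}}\ \ge\ (1-o(1))\,\frac{\log R}{\log x}\,\delta\,\frac{k\,J_{k}(F)}{I_{k}(F)}\ =\ (1-o(1))\,\theta'\,\delta\,\frac{k\,J_{k}(F)}{I_{k}(F)}.
\]
Maynard's variational estimate \cite[\S4]{MAY} supplies a smooth symmetric $F$ on the simplex with $kJ_{k}(F)/I_{k}(F)\ge\log k-2\log\log k-2\ge\tfrac12\log k$ for $k$ above an absolute constant; fixing such an $F$ and choosing $C=C(\theta,\alpha)$ large enough that $k\ge C$ forces this and that $C\ge6/\theta'$, we obtain $S_{2}\ge2\rho S_{1}$, hence $S_{2}-\rho S_{1}\ge\rho S_{1}=C^{-1}\delta(\log k)S_{1}\gg\delta(\log k)S_{1}$, which is (i). With (ii) and the Cauchy--Schwarz reduction of the first paragraph, \eqref{eq:3.3} follows.

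The step I expect to be the main obstacle is the fourth-moment bound (ii), $S_{3}\ll\e^{O(k)}S_{1}^{2}/x$. The termwise estimate $|\lambda_{d_{1},\ldots,d_{k}}|\ll(\log R)^{k}\prod_{i}d_{i}/\phi(d_{i})$ is far too lossy here: inserted into $S_{3}$ it produces a factor $(\log R)^{ck}$ with $c$ much larger than the $2$ one can afford against $S_{1}^{2}/x\asymp x(\log R)^{2k}I_{k}(F)^{2}$. One must instead diagonalise $S_{3}=\sum_{n}w_{n}^{2}$ in its own right, carrying the Maynard change of variables through the four-fold product and bounding the resulting $2k$-dimensional local densities $\prod_{p<R}(1+O(1/p))$ uniformly in $k$; this is what pins the loss at the harmless level $\e^{O(k)}$. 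The other points --- uniformity of all the $o(1)$'s and of the variational estimate across $0<\theta<1$, $k\le(\log x)^{\alpha}$, $1\le g_{i},h_{i},B\le x^{\alpha}$, and the routine positivity of $c(\cL,B)$, hence of $S_{1}$, from admissibility --- are bookkeeping needed only to keep all implied constants dependent on $\theta$ and $\alpha$ alone.
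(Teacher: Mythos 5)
The paper offers no proof of Theorem \ref{thm:3.1} at all: it is quoted verbatim as a special case of Maynard \cite[Theorem 3.1]{MAY2}, so your attempt is effectively being measured against Maynard's argument. Your Cauchy--Schwarz reduction and step (i) are fine (they are the standard Maynard computation), but the proof stands or falls with the fourth-moment bound (ii), and (ii) is false. The Maynard weights are concentrated on the set of $n$ for which every $L_i(n)$ is free of prime factors below a power of $R$; that set has density $\asymp \mathfrak{S}(\cL)(\log R)^{-k}$ up to $\e^{O(k)}$ factors, and on it only the tuple $d_1=\cdots=d_k=1$ contributes, so $w_n \asymp \lambda_{1,\ldots,1}^2 \asymp (\log R)^{2k}$ there. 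Hence $S_3 \gg \e^{-O(k)}\,x\,\mathfrak{S}(\cL)(\log R)^{3k}$, while $S_1^2/x \asymp x\,\mathfrak{S}(\cL)^2(\log R)^{2k}$, so $S_3/(S_1^2/x)$ is of size $(\log R)^{k(1-o(1))}$, not $\e^{O(k)}$. Equivalently, the conclusion you derive from (ii), namely $\#\mathcal{N} \gg \e^{-O(k)}x/k^2$ --- a positive proportion of $n \in [x,2x)$ --- cannot be correct: in precisely the regime this paper needs (bounded $k \ge C$, $g_i = g$, $0 < h_i \ll \log x$, threshold $m \ge 2$), a union bound over pairs together with the Selberg sieve upper bound gives $\#\mathcal{N} \ll_k x(\log_2 x)^{O(1)}/(\log x)^{2}$. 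So no correct argument can deliver (ii); the step you yourself flagged as ``the main obstacle'' is not an obstacle but an impossibility.

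The strategy is in principle repairable: the true order is $S_3 \ll \e^{O(k)}(\log R)^{k}S_1^2/x$, and feeding that into your Cauchy--Schwarz chain gives exactly $\#\mathcal{N} \gg \e^{-O(k)}x/(\log x)^{k}$, i.e.\ \eqref{eq:3.3}. But establishing that sharp fourth moment uniformly for $k \le (\log x)^{\alpha}$ and coefficients up to $x^{\alpha}$ is a genuine computation, not the ``routine diagonalisation'' you describe, and it is not how the result is actually obtained. In Maynard's proof (the one this paper leans on), the cardinality bound is extracted from $\sum_{n \in \mathcal{N}} w_n \ge S_2 - \rho S_1 \gg S_1$ by an essentially pointwise upper bound of size $\e^{O(k)}(\log R)^{2k}$ for the weights on their effective support, which is exactly what produces the $x/(\e^{C}\log x)^{k}$ shape in \eqref{eq:3.3}; no second moment of $w_n$ is needed. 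As written, your proposal has a fatal gap at (ii).
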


\subsection*{A level of distribution result}

We need to show that Hypothesis \ref{hyp:1} holds for certain 
choices of $\cL,B,x,\theta$. 
We defer proof of the following result to \S\ref{sec:4}.

\begin{lemma}
 \label{lem:3.2}
Fix a positive integer $k$ and let $\cL = \{L_1,\ldots,L_k\}$ be 
an admissible set of $k$ linear functions. 
Let $B$ be a positive integer and let $x$ be a large real number.
Let $c$ be a positive, absolute constant and let 
$\eta \defeq c/(500k^2)$.
Suppose that the coefficients of $L_i(n) \defeq g_in + h_i$ 
satisfy $g_i = g$ and $1 \le h_i \le x$ for $i = 1,\ldots,k$, 
where $g$ is a positive integer that is coprime to $B$ and 
divides $\prod_{p \le \log x^{\eta}} p$.
Then $B$ and $c$ may be chosen so that the following holds once 
$x$ is large enough in terms of $k$.
For each $L \in \cL$,
\begin{equation}
 \label{eq:3.4}
  \frac{\phi(B)}{B}
   \frac{\phi(g)}{g}\sum_{n \in [x,2x)} \ind{\PP}(L(n))
    >
     \frac{x}{2\log x}, 
\end{equation}
and
\begin{equation}
 \label{eq:3.5}
 \sums[q \le x^{1/8}][(q,B) = 1]
  \max_{(L(a),q) = 1}
   \bigg|
         \sums[n \in [x,2x)][n \equiv a \pod{q}]
          \ind{\PP}(L(n))
         -
           \frac{1}{\phi_L(q)}
            \sum_{n \in [x,2x)}
             \ind{\PP}(L(n))
   \bigg|
    \ll
      \frac{\sum_{n \in [x,2x)} \ind{\PP}(L(n))}
           {(\log x)^{100k^2}}.
\end{equation}
Moreover, $c < 1$ and either $B = 1$ or $B$ is a prime satisfying 
$\log_2 x^{\eta} \ll B \le x^{2\eta}$.
\end{lemma}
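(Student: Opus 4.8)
The plan is to strip off the fixed common difference $g$, turning both \eqref{eq:3.4} and \eqref{eq:3.5} into statements about primes in arithmetic progressions to moduli of the form $gq$ confined to the residue class $h\pod{g}$: the main term will come from the prime number theorem for $(\log x)^{\eta}$-smooth moduli, and the error from the Bombieri--Vinogradov theorem. First I would note that admissibility of $\cL$, together with the fact that every prime factor of $g$ is at most $(\log x)^{\eta}$, forces $(h_i,g)=1$ for each $i$ (a prime $p\mid g$ dividing some $h_i$ would make $p\mid L_i(n)$ identically, so the $L_j$ would cover every class modulo $p$). Hence for $n\equiv a\pod{q}$ with $(L(a),q)=1$ the values $L(n)=gn+h$ run through the class $ga+h\pod{gq}$, which is coprime to $gq$, so that $\sum_{n\in[x,2x),\,n\equiv a\pod{q}}\ind{\PP}(L(n))=\pi(2gx+h;gq,ga+h)-\pi(gx+h;gq,ga+h)$, whereas $\phi_L(q)^{-1}\sum_{n\in[x,2x)}\ind{\PP}(L(n))$ equals $\tfrac{\phi(g)}{\phi(gq)}\bigl(\pi(2gx+h;g,h)-\pi(gx+h;g,h)\bigr)$; both have the same predicted main term $gx/(\phi(gq)\log x)$.

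Next I would fix $B$. By the Landau--Page theorem there is at most one modulus $q_1\le x^{2\eta}$ whose Dirichlet $L$-functions have a real zero exceeding $1-c'/\log x$, for a suitable absolute $c'>0$; since every divisor of $g$ is $\le x^{2\eta}$ for $x$ large, this $q_1$ is the only modulus relevant to the $L$-functions attached to divisors of $g$. If such a $q_1$ exists I would take $B$ to be its largest prime factor, so that $B\ge P^{+}(q_1)\gg\log q_1\gg\log_2 x^{\eta}$ (using Siegel's theorem, which gives $q_1\gg_A(\log x)^A$ for every $A$) while $B\le q_1\le x^{2\eta}$; otherwise I would set $B=1$. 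In either case $(g,B)=1$ is consistent and forces $q_1\nmid g$, so no $L(s,\chi)$ with $\chi$ modulo a divisor of $g$ has a zero in the classical zero-free region. Since $g$ is $(\log x)^{\eta}$-smooth with $\eta<\tfrac12$, we have $\log g=o\bigl((\log x)^{1/2}\bigr)$, and the standard contour argument then gives $\pi(Y;g,b)=\operatorname{li}(Y)/\phi(g)+O\bigl(Y\exp(-c(\log Y)^{1/2})\bigr)$ for $Y\asymp x$ and any $(b,g)=1$, an error that is $o\bigl(Y/(\phi(g)\log Y)\bigr)$. Substituting into the identity above and multiplying by $\tfrac{\phi(B)\phi(g)}{Bg}$, with $\phi(B)/B\ge\tfrac12$, yields \eqref{eq:3.4}. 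The small absolute choice of $c$, hence of $\eta$, is exactly what keeps this error below $Y/\phi(g)$, since $\phi(g)$ can be as large as $\exp\bigl((1+o(1))(\log x)^{\eta}\bigr)$.

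For \eqref{eq:3.5} I would write each summand as the discrepancy of $\pi(Y;gq,ga+h)$ against $\operatorname{li}(Y)/\phi(gq)$ and against $\tfrac{\phi(g)}{\phi(gq)}$ times the count in the class $h\pod{g}$, at $Y=gx+h$ and $Y=2gx+h$, and expand $\pi(Y;gq,\cdot)$ over Dirichlet characters modulo $gq$. Since $gq\le g\,x^{1/8}\le x^{1/8+2\eta}<x^{1/4}$, all moduli lie well inside the Bombieri--Vinogradov range. The characters trivial on the part of the modulus coprime to $g$ contribute only prime sums twisted by characters modulo divisors of $g$, which by the zero-free region are $O\bigl(Y\exp(-c(\log Y)^{1/2})\bigr)$ and, carrying the weight $\phi(g)/\phi(gq)\le1/\phi(q)$ and summed over $q$, are negligible against the right side. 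For the rest, the restriction $(q,B)=1$ (with $(g,B)=1$) keeps the exceptional modulus $q_1$ out of every surviving $gq$, so the large-sieve proof of Bombieri--Vinogradov applies with no exceptional term and yields power-of-log savings of any fixed order, in particular $(\log x)^{-100k^2-1}$; confining attention to the class $h\pod{g}$ only inserts a bounded character-average weight, so the saving is measured against $gx/(\phi(g)\log x)\asymp\sum_{n\in[x,2x)}\ind{\PP}(L(n))$, which is \eqref{eq:3.5}. The implied constant may depend on $k$ and is ineffective through Siegel's theorem, but this is harmless since $k$ is fixed.

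The main obstacle is the Siegel-zero bookkeeping: one cannot rule such a zero out, so the argument must branch on it, and the delicate point is that a single prime $B$ must do double duty---keeping the exceptional modulus out of $g$, so that the mod-$g$ prime number theorem underlying \eqref{eq:3.4} holds, and out of every modulus $gq$ surviving the $(q,B)=1$ restriction, so that \eqref{eq:3.5} collapses to a clean Bombieri--Vinogradov estimate---while staying compatible with $g$ being $(\log x)^{\eta}$-smooth. The peculiar constraints in the statement (the $(\log x)^{\eta}$-smoothness of $g$, its coprimality to $B$, and $\log_2 x^{\eta}\ll B\le x^{2\eta}$) are all artifacts of making this balance work.
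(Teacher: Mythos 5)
Your plan founders on a misreading of the smoothness hypothesis that propagates into every estimate. The lemma requires $g\mid\prod_{p\le\log x^{\eta}}p$, and here $\log x^{\eta}$ means $\log(x^{\eta})=\eta\log x$, not $(\log x)^{\eta}$ --- the paper makes this explicit both in \S 3 (``$\log x^{\eta}>y$'' with $y\asymp\log x\log_4 x/\log_3 x$) and in \S 4 (``we have $g\le x^{(1+o(1))\eta}$ by the prime number theorem''). So $g$ is $\eta\log x$-smooth but can be a genuine \emph{power of $x$}, as large as $x^{\eta(1+o(1))}$, and consequently $\phi(g)$ can be $\gg x^{\eta/2}$. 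Your statements ``$\log g=o((\log x)^{1/2})$'' and ``$\phi(g)$ can be as large as $\exp\bigl((1+o(1))(\log x)^{\eta}\bigr)$'' are therefore false, and with the correct bound the two classical tools you invoke are far too weak. For \eqref{eq:3.4} you need an error $o\bigl(gx/(\phi(g)\log x)\bigr)$, i.e.\ after dividing by $\phi(g)\asymp x^{\eta+o(1)}$, but the de la Vall\'ee Poussin error $O\bigl(gx\exp(-c\sqrt{\log gx})\bigr)$ only saves $\exp(-c\sqrt{\log x})$, which is swallowed by $\phi(g)$. For \eqref{eq:3.5} the same problem reappears: Bombieri--Vinogradov over moduli $d\le gx^{1/8}$ gives total error $\ll gx/(\log x)^A$, but the target is $\ll gx/\bigl(\phi(g)(\log x)^{1+100k^2}\bigr)$; the extra factor $\phi(g)\gg x^{\eta/2}$ is a power of $x$ and cannot be recovered by any power-of-log saving, no matter how many times you point out that the exceptional modulus has been excluded.

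This is precisely why the paper does not use the classical zero-free region or Bombieri--Vinogradov at all, but instead combines Montgomery's log-free zero-density estimate (cited as \cite[Theorem 12.2]{MON}) with a zero-free region \emph{for characters of smooth conductor}, quoted here as Lemma~\ref{lem:3.2}'s companion Lemma~\ref{lem:4.1} (from \cite[Lemma 4.1]{BFM}). That lemma gives a zero-free region of width $c/\log P$ where $P$ is the \emph{largest prime factor} of the conductor --- here $P\le\eta\log x$, so the width is $\asymp 1/\log_2 x$, which is vastly wider than the classical $c/\log q$ when $q$ is a power of $x$. The savings this buys, of shape $(gx)^{-c/\log_2 x}$, is what actually beats $\phi(g)$. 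Your $B$ (largest prime factor of the Landau--Page exceptional modulus) is playing a role analogous to the paper's $B=\ell(x^{2\eta})$, but without the smooth zero-free region behind it the construction has nowhere to go. In short: the decomposition into primes in APs mod $gq$, the ``strip off $g$'' philosophy, and the idea of sieving out one exceptional prime are all correct and indeed mirror the paper, but the analytic engine you plug in --- classical PNT for APs plus Bombieri--Vinogradov --- is categorically insufficient once $g$ is allowed to be a power of $x$, and the missing ingredient (zero-density plus a smooth-modulus zero-free region, i.e.\ Lemma~\ref{lem:4.1} and Montgomery's Theorem~12.2) is the entire content of the proof.
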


\subsection*{An Erd{\H o}s--Rankin type construction}
We quote \cite[Lemma 5.2]{BFM}.
First, some more notation and terminology: 
a finite set $\{h_1,\ldots,h_k\}$ of integers is admissible if 
the set $\{L_1,\ldots,L_k\}$ of linear forms given by 
$L_i(n) = n + h_i$, $i = 1,\ldots,k$, is admissible.
Given a real number $z \ge 1$ we let 
$[z] \defeq \{1,\ldots,\lfloor z \rfloor\}$, where 
$\lfloor z \rfloor$ denotes the greatest integer less than or 
equal to $z$.

\begin{lemma}
 \label{lem:3.3}
Fix a positive integer $k$ and $k$ nonnegative real numbers 
$\beta_1,\ldots,\beta_k$.
Suppose that $\beta_1 \le \cdots \le \beta_k$.
There is a constant $C'$, depending only on $k$ and 
$\beta_k$, such that the following holds.
Let $v,y,z$ be real numbers satisfying $v \ge 1$, $y \ge C'$ and 
\begin{equation}
 \label{eq:3.6}
  2y(1 + (1 + \beta_k)v)
   \le 2z 
    \le y(\log_2 y)/\log_3 y.
\end{equation}
Let $B$ be any positive integer such that for all prime divisors 
$l$ of $B$ \textup{(}if any\textup{)}, 
\begin{equation}
 \label{eq:3.7}
  \textstyle 
   \sum_{p \mid B, \, p \ge l} 1/p 
    \ll 1/l
     \ll 1/\log y.
\end{equation}
There exists an admissible set $\{h_1,\ldots,h_k\}$ and a sequence
$(a_p \pod{p})_{p \le y, \, p \, \nmid B}$ of residue 
classes such that 
\[
  h_i = y + \beta_i v y + O\big(y\e^{-(\log y)^{1/4}}\big)
\]
for $i = 1,\ldots,k$ and 
\[
  \textstyle
   \{h_1,\ldots,h_k\} 
  = [z]
     \, \setminus \, 
      \bigcup_{p \le y, \, p \, \nmid B} 
       a_p \pod{p}.  
\]
\end{lemma}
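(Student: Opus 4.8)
The plan is to translate the lemma into a covering problem --- exhibiting residue classes modulo the primes $p\le y$ with $p\nmid B$ whose union omits from $[z]$ exactly a prescribed $k$-element set $\{h_1,\dots,h_k\}$ --- and to solve that covering problem by the Erd{\H o}s--Rankin sieve, with the medium primes treated efficiently via the Maynard sieve. Set $t_i\defeq y+\beta_i vy$ and $R\defeq y\e^{-(\log y)^{1/4}}$. The upper bound in \eqref{eq:3.6} gives $z\ge y(1+(1+\beta_k)v)$, so $y\le t_i\le z-vy$ for each $i$; as $v\ge1$ this places every $t_i$ in $[1,z]$, and \eqref{eq:3.6} also forces $z\le\tfrac12 y(\log_2 y)/\log_3 y=y^{1+o(1)}$, whence $R=y^{1-o(1)}$ and $\log z=(1+o(1))\log y$. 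By the prime number theorem in short intervals each window $I_i\defeq[t_i-R,t_i+R]$ contains $\gg R/\log y\to\infty$ primes, so we may pick the $h_i$ to be distinct primes with $h_i\in I_i$. Then $h_i=y+\beta_i vy+O(y\e^{-(\log y)^{1/4}})$ and the $h_i$ are distinct integers in $[z]$; since each $h_i$ is prime and exceeds $z^{1/2}$ (while $h_i<z<(z^{1/2})^2$), no $h_i$ has a prime factor $\le z^{1/2}$, so for $n\equiv0\pmod p$ with $p\le z^{1/2}$ we get $n+h_i\equiv h_i\not\equiv0\pmod p$ for all $i$, and at primes $p>z^{1/2}\ge k$ the forms $n+h_i$ occupy fewer than $p$ residues; hence $\{h_1,\dots,h_k\}$ is admissible. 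It now suffices to find residue classes $(a_p\pod p)_{p\le y,\,p\nmid B}$ with $a_p\not\equiv h_i\pmod p$ for all $i$ and all such $p$ (forcing $\{h_1,\dots,h_k\}\subseteq[z]\setminus\bigcup_p a_p\pod p$) and with every $n\in[z]\setminus\{h_1,\dots,h_k\}$ lying in some $a_p\pod p$.

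First I would thin $[z]$ to a near-prime set. For each prime $p\le z^{1/2}$ with $p\nmid B$ take $a_p\equiv0\pmod p$; this sieves out every multiple of $p$ while, since $p\nmid h_i$, leaving every $h_i$ uncovered. The surviving set $U$ consists of those $n\le z$ all of whose prime factors either exceed $z^{1/2}$ or divide $B$; because $z<(z^{1/2})^2$, such an $n$ is a divisor of $B^\infty$ times a prime in $(z^{1/2},z]$, or a divisor of $B^\infty$ alone, and using \eqref{eq:3.7} (which gives $\sum_{n_1\mid B^\infty}1/n_1=1+O(1/\log y)$) one gets $|U|\ll z/\log z\ll z/\log y$ with all but $o(|U|)$ of the elements of $U$ actually prime. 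Note $\{h_1,\dots,h_k\}\subseteq U$, and that skipping the $O(1)$ primes dividing $B$ --- all of size $\ll\log y$ by \eqref{eq:3.7} --- changes none of these estimates by more than a factor $1+O(1/\log y)$.

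The crux is the next step. After the thinning, $|U|\asymp z/\log y$, which exceeds the number $\sim y/\log y$ of primes in $(y^{3/4},y]$ by a factor $\asymp z/y$, and $z/y$ may be as large as $\tfrac12(\log_2 y)/\log_3 y\to\infty$; a greedy choice of residue classes cannot remove this surplus, since greedy sieving with the primes in $(z^{1/2},y^{3/4}]$ shrinks $U$ only by a bounded factor, and indeed $z$ exceeds the classical Rankin threshold for this covering problem. Instead I would assign to each medium prime $p\in(z^{1/2},y^{3/4}]$ with $p\nmid B$ a residue class $a_p\not\equiv h_i\pmod p$, the whole family being chosen by the Maynard-sieve-weighted random procedure developed for the large-gaps problem by Ford, Green, Konyagin and Tao and by Maynard (the quantitative input being that of \cite{MAY2}, as used in \cite{BFM}): because $U$ is, up to a negligible set, a set of primes, this covers all but $\ll y/\log^2 y$ of the elements of $U\setminus\{h_1,\dots,h_k\}$, and here there is a factor $\asymp\log_4 y$ of slack, since $z$ lies comfortably below the Ford--Green--Konyagin--Tao covering threshold $\asymp y(\log_2 y)(\log_4 y)/\log_3 y$.

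Finally I would clear the remainder with the large primes. Let $u_1,u_2,\dots$ enumerate the $\le y/\log^2 y$ elements of $U\setminus\{h_1,\dots,h_k\}$ not yet covered. The primes $p\in(y^{3/4},y]$ number $\sim y/\log y$, with $O(1)$ dividing $B$, and for each $u_j$ all but $O(1)$ of them satisfy $u_j\not\equiv h_i\pmod p$ for every $i$ (such a $p$ must divide one of the $k$ nonzero differences $u_j-h_i$, each of size $<z<y^2$, so has $\ll1$ prime factors exceeding $y^{3/4}$); hence we may choose distinct admissible primes $p_j$ and set $a_{p_j}\equiv u_j\pmod{p_j}$, while any still-unused prime $p\le y$ receives an arbitrary residue class $\not\equiv h_i\pmod p$ (possible since $p>k$ there). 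Now every $n\in[z]\setminus\{h_1,\dots,h_k\}$ is covered --- by the thinning, by the medium step, or explicitly --- while no $h_i$ is ever covered, so $[z]\setminus\bigcup_{p\le y,\,p\nmid B}a_p\pod p=\{h_1,\dots,h_k\}$, as the lemma demands. The one genuine obstacle is the medium-prime step: the hypothesis on $z$ puts the covering problem beyond the classical Erd{\H o}s--Rankin range, so one must import the quantitative estimates for primes in progressions coming from the Maynard--GPY method, exactly as in the large-gaps constructions underlying \cite{MAY2} and \cite{BFM}.
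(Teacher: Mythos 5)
You have correctly identified where the difficulty lies, but that is also exactly where your argument stops being a proof. The outer scaffolding is fine: placing each $h_i$ at a prime in the window $y+\beta_ivy+O\big(y\exp(-(\log y)^{1/4})\big)$ (the prime number theorem with the classical error term suffices), keeping every chosen class off the $h_i$, and finishing with an injective matching against large primes is in the spirit of the actual argument — note the paper itself gives no proof, quoting the lemma verbatim from \cite[Lemma 5.2]{BFM}, whose proof is an elementary Erd\H{o}s--Rankin-type construction. But your ``crux'' step, covering the $\asymp z/\log y$ surviving primes, is dispatched by an appeal to ``the Maynard-sieve-weighted random procedure'' of the large-gaps papers, with no statement of what theorem is being invoked and no verification that it applies; nothing in \cite{MAY2} or \cite{BFM} is quotable in that form, so the key covering step is simply unproved. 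Worse, the premise behind the appeal is wrong: by \eqref{eq:3.6} one has $z\le\tfrac12 y(\log_2 y)/\log_3 y$, which is the \emph{Westzynthius} range, far below the classical thresholds — Erd\H{o}s's method covers $z\asymp y\log y/(\log_2 y)^2$ and Rankin's $z\asymp y\log y\log_3 y/(\log_2 y)^2$ with the primes up to $y$ — and the ``Ford--Green--Konyagin--Tao threshold $\asymp y(\log_2 y)(\log_4 y)/\log_3 y$'' you quote is off by roughly a factor $\log y$ (you have transplanted the bound for $G(X)/\log X$ with $X$ replaced by $y$ rather than $\log X\approx y$). A small symptom of the same haste: \eqref{eq:3.7} forces the prime divisors of $B$ to be $\gg\log y$, not $\ll\log y$ as you assert.

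There is also a structural reason your plan, as organized, cannot be rescued by any black box: assigning $0\pmod p$ to \emph{every} prime $p\le z^{1/2}$ spends precisely the small primes whose greedy use supplies the needed saving, and for the remaining primes $p\in(z^{1/2},y]$ a residue class meets $[z]$ in only $O(z/p)=O(\log_2 y)$ integers, so your claim that such classes absorb on average $\asymp\log_2 y/\log_3 y$ surviving primes each is exactly the hard content left unsupplied. The classical allocation — essentially what \cite{BFM} does — works: give $a_p=0$ to the primes $p\le 3z/y$ and to the medium primes $w<p\le y/3$, where $\log w\asymp\log y\log_3 y/\log_2 y$; the survivors in $[z]$ are then, besides $w$-smooth numbers (at most $z(\log y)^{-3}$ of them by Rankin's trick, for a suitable constant in the definition of $w$), just the primes in $(y/3,z]$, about $z/\log y$ of them. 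Next use the primes $p\in(3z/y,w]$ \emph{greedily}, always avoiding the $k$ classes through the $h_i$ (a negligible correction): this multiplies the count of surviving primes by $\prod_{3z/y<p\le w}(1-1/p)\ll\log_2 y/\log y$, leaving $O\big(y(\log_2 y)^2(\log y)^{-2}\big)=o(y/\log y)$ elements, which — together with the smooth leftovers and $n=1$ — are matched injectively with the unused primes in $(y/3,y]$, exactly as in your final step; admissibility and the prescribed positions are handled as you describe. No Maynard-sieve or large-gaps input is needed anywhere in this lemma.
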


\subsection*{Deduction of Theorem \ref{thm:1.1}}

Fix a positive real number $\lambda$ and a nonnegative integer 
$m$.
Let $C$ be the constant of Theorem \ref{thm:3.1}, which 
depends on $\theta$ and $\alpha$. 
We will apply Theorem \ref{thm:3.1} with $\theta \defeq 1/8$ 
and $\alpha \defeq 1$, so $C$ may be regarded as absolute.
We will also apply the theorem with $\delta \defeq 1/2$.
Let $k$ be the smallest positive integer satisfying  
$k \ge C$, $k \ge \e^2$ and $k \ge \e^{2Cm}$ (i.e.\ 
$k \ge C$, $\delta \ge (\log k)^{-1}$ and 
$C^{-1}\delta \log k \ge m$).

Let $\beta_i = 2^{i - k}\lambda$, $i = 1,\ldots,k$ and let $C'$ 
be the constant of Lemma \ref{lem:3.3}, which depends on 
$k$ (hence $m$) and $\beta_k = \lambda$. 
Let $x$ be a large real number and define
\begin{equation}
 \label{eq:3.8}
 v \defeq \frac{1}{3(1 + 3\lambda)}\frac{\log_3 x}{\log_4 x}, 
 \quad 
 y \defeq 3(1 + 3\lambda)\log x \frac{\log_4 x}{\log_3 x}, 
 \quad 
 z = (1 + 3\lambda)\log x.
\end{equation}
We think of $x$ as tending to infinity, and we tacitly assume 
throughout that $x$ is ``sufficiently large'' in terms of any 
specified fixed quantity, hence ultimately in terms of $\lambda$ 
and $m$.
Thus, for instance, as is straightforward to verify, we have 
$v \ge 1$, $y \ge C'$ and \eqref{eq:3.6}.

Let $\eta$ and $B$ be as in Lemma \ref{lem:3.2}, i.e.\
$\eta \defeq c/(500k^2)$ for a certain absolute constant 
$c \in (0,1)$, and either $B = 1$ or $B$ is a prime satisfying 
$\log_2 x^{\eta} \ll B \le x^{2\eta}$.
As $\log x^{\eta} > y$, \eqref{eq:3.7} is satisfied.
Each hypothesis of Lemma \ref{lem:3.3} thus accounted for, we 
conclude that there exists an admissible set 
$\{h_1,\ldots,h_k\}$ and a sequence 
$(a_p \pod{p})_{p \le y, \, p \, \nmid B}$ of residue classes such 
that 
\[
  h_i 
   = 
    y + 2^{i - k}\lambda \log x + O\big(y\e^{-(\log y)^{1/4}}\big)
\]
for $i = 1,\ldots,k$ (we have $vy = \log x$), and 
\[
  \textstyle
   \{h_1,\ldots,h_k\} 
  = [(1 + 3\lambda)\log x]
     \, \setminus \, 
      \bigcup_{p \le y, \, p \, \nmid B} 
       a_p \pod{p}.  
\]
We work with such an admissible set and sequence of residue 
classes.
Note that
\begin{equation}
 \label{eq:3.9}
  h_k - h_1 < -1 + \lambda\log x.
\end{equation}
and
\begin{equation}
 \label{eq:3.10}
  1 < h_1 < \cdots < h_k < -1 + 2\lambda\log x
\end{equation}

We let $g \defeq \prod_{p \le y, \, p \nmid B} p$ and  
$h \pod{g}$ be the residue class modulo $g$ such that 
$h \equiv -a_p \pod{p}$ for each prime $p$ dividing $g$. 
Let us suppose that $0 \le h < g$.
We let $\cL \defeq \{L_1,\ldots,L_k\}$ be the set of linear 
functions in which $L_i(n) \defeq gn + h + h_i$ for 
$i = 1,\ldots,k$. 
It is straightforward to verify that $\cL$ is admissible, and 
that for all positive integers $n$,
\begin{equation}
 \label{eq:3.11}
 (gn + h,gn + h + (1 + 3\lambda)\log x] \cap \PP
  =
   \{L_1(n),\ldots,L_k(n)\} \cap \PP.
\end{equation}

We have $(g,B) = 1$ by definition, and as already noted, 
$\log x^{\eta} > y$, so $g$ divides 
$\prod_{p \le \log x^{\eta}} p$.
In fact, by Chebyshev's bounds and since $\eta$ is very small, we 
certainly have $0 < g, h + h_i < x$ for $i = 1,\ldots,k$.
Therefore, by Lemma \ref{lem:3.2}, $\cL,B,x$ and $\theta = 1/8$ 
satisfy Hypothesis \ref{hyp:1}, and \eqref{eq:3.2} 
holds with $\delta = 1/2$ for each $L \in \cL$.
We now invoke Theorem \ref{thm:3.1} with $\theta = 1/8$, 
$\alpha = 1$ (we have $k \le \log x$) and $\delta = 1/2$.
We've chosen $k$ so that $C^{-1}\delta\log k \ge m$, so we 
infer that 
\begin{equation}
 \label{eq:3.12}
 \#
  \Br{
       n \in [x,2x) : 
       \#(\{L_1(n),\ldots,L_k(n)\} \cap \PP) 
           \ge m
     }
   \gg
    \frac{x}{(\e^C\log x)^k}.
\end{equation}

Choose $n \in [x,2x)$ such that  
$\#(\{L_1(n),\ldots,L_k(n)\} \cap \PP) \ge m$.
Consider the intervals 
\[
  \cI_j
   \defeq 
    (N_j, N_j + \lambda\log N_j], 
     \quad 
      N_j \defeq gn + h + j, 
      \quad 
       j = 0,\ldots,\lfloor 2\lambda\log N_0 \rfloor.
\]
Now, $N_0 = x^{1 + o(1)}$, so for $j$ in the given range we have 
\[
 \cI_j
  \subseteq
   (gn + h,gn + h + (1 + 3\lambda)\log x], 
\]
and so by \eqref{eq:3.11},  
\begin{equation}
 \label{eq:3.13}
 \cI_j \cap \PP
  =
   (\cI_j \cap \{L_1(n),\ldots,L_k(n)\}) \cap \PP.  
\end{equation}
By \eqref{eq:3.9},  
\[
 L_1(n) < \cdots < L_k(n)
         < L_1(n) - 1 + \lambda\log x 
          < L_1(n) - 1 + \lambda\log N_0.
\]
Thus, if $j = h_1 - 1$ (so that $N_j = L_1(n) - 1$), then 
\begin{equation}
 \label{eq:3.14}
  \cI_j \cap \{L_1(n),\ldots,L_k(n)\} = \{L_1(n),\ldots,L_k(n)\}. 
\end{equation}
By \eqref{eq:3.10}, 
\[
 L_k(n) 
  = N_0 + h_k 
   < N_0 - 1 + 2\lambda\log x 
    < N_0 - 1 + 2\lambda\log N_0. 
\]
Thus, if $j = \lfloor2\lambda\log N_0\rfloor$, then 
\begin{equation}
 \label{eq:3.15}
 \cI_j \cap \{L_1(n),\ldots,L_k(n)\} = \emptyset.
\end{equation}
Therefore, by \eqref{eq:3.13} and \eqref{eq:3.14}, 
\[
  \#(\cI_{h_1 - 1} \cap \PP)
   =
    \#(\{L_1(n),\ldots,L_k(n)\} \cap \PP)
     \ge 
      m, 
\]
while on the other hand, by \eqref{eq:3.13} and 
\eqref{eq:3.15}, 
\[
 \#(\cI_{\lfloor2\lambda\log N_0\rfloor} \cap \PP) = 0.
\]
Now, for any $j$, if 
$\#(\cI_{j+1} \cap \PP) < \#(\cI_j \cap \PP)$ then 
$\#(\cI_{j+1} \cap \PP) = \#(\cI_j \cap \PP) - 1$.
We must conclude that there is some $j$ in the range 
$h_1 - 1 \le j \le \lfloor2\lambda\log N_0\rfloor$ 
for which $\pi(N_j + \lambda\log N_j) - \pi(N_j) = m$.

By the prime number theorem and the definition of $g$, $B$ and 
$y$, we have $g = \e^{(1 + o(1))y} > (1 + 3\lambda)\log x$.
Since $g(n + 1) + h > gn + h + (1 + 3\lambda)\log x$, no two 
values of $n$ can give rise to the same $N_j$ in this way. 
We deduce from \eqref{eq:3.12} that, with $X \defeq 4gx$, 
\[
 \#\{N \le X : \pi(N + \lambda\log N) - \pi(N) = m\}
  \gg 
   \frac{x}{(\e^C\log x)^k}.
\]
It follows that the left-hand side exceeds 
$X^{1 - \varepsilon(X)}$, where 
\[
 \varepsilon(X) \defeq (\log_4 X)^2/\log_3 X.
\]
(Recall that $\log g = (1 + o(1))y$ by the prime number theorem, 
and that, by \eqref{eq:3.8}, 
$y = 3(1 + 3\lambda)\log x\log_4 x/\log_3 x$.)
\qed


\section{Proof of Lemma \ref{lem:3.2}}
 \label{sec:4}

Lemma \ref{lem:3.2} is similar to \cite[Theorem 4.2]{BFM}.
We must nevertheless verify the details.
First, some more notation: given a positive integer $q$, 
$\chi \bmod q$, or simply $\chi$ if $q$ is clear in context, 
denotes a Dirichlet character to the modulus $q$,  
$L(s,\chi)$ denotes the $L$-function associated with it and 
$\bar{\chi}$ its complex conjugate. 
Also, $\gp{q}$ denotes the greatest prime divisor of $q$ 
($\gp{1} \defeq 1$ by convention).
We quote \cite[Lemma 4.1]{BFM}.

\begin{lemma}
 \label{lem:4.1}
Let $T \ge 3$ and let $P \ge T^{1/\log_2 T}$.
Among all primitive Dirichlet characters $\chi \bmod \ell$ to 
moduli $\ell$ satisfying $\ell \le T$ and $\gp{\ell} \le P$, there 
is at most one for which the associated $L$-function $L(s,\chi)$ 
has a zero in the region 
\begin{equation}
 \label{eq:4.1} 
  \Re(s) > 1 - c/\log P, \quad 
  |\Im(s)| \le \exp\big(\log P/\sqrt{\log T} \, \big),
\end{equation}
where $c > 0$ is a certain \textup{(}small\textup{)} absolute 
constant.
If such a character $\chi \bmod \ell$ exists, then $\chi$ is real 
and $L(s,\chi)$ has just one zero in the region \eqref{eq:4.1}, 
which is real and simple, and 
\begin{equation}
 \label{eq:4.2} 
  \gp{\ell} \gg \log \ell \gg \log_2 T.
\end{equation}
\end{lemma}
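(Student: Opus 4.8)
The plan is to prove Lemma~\ref{lem:4.1} by the classical positivity method for products of Dirichlet $L$-functions, the one non-routine ingredient being a quantitatively explicit Deuring--Heilbronn zero-repulsion estimate; it is that ingredient which lets us replace the customary zero-free width $\asymp 1/\log T$ by the wider $\asymp 1/\log P$ demanded here. Write $Y \defeq \exp(\log P/\sqrt{\log T})$ for the height cutoff. It is worth isolating what the hypotheses $3 \le P \le T$ and $P \ge T^{1/\log_2 T}$ give: $\log P \le \log T$, $\log Y \le \sqrt{\log T} = o(\log P)$, and, since $u \mapsto u\log u$ is increasing while $u\log u \sim \log T$ at $u = \log T/\log_2 T$,
\[
 \log P \cdot \log_2 P \ge \tfrac{1}{2}\log T
\]
for $T$ large. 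Equivalently $1/\log P \ll \log_2 T/\log T$, so a zero with $\Re s > 1 - c/\log P$ is at most ``a factor $\log_2 T$ closer to the line'' than the classical region forbids --- exactly the range Deuring--Heilbronn reaches.

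\emph{The classical inputs.} For primitive $\chi \bmod \ell$, $1 < \sigma \le 2$ and a non-trivial zero $\rho = \beta + i\gamma$ of $L(s,\chi)$, the Hadamard product gives $-\Re\frac{L'}{L}(\sigma+it,\chi) \le A\log(\ell(|t|+3)) - \Re\frac{1}{\sigma + it - \rho}$ (the remaining zeros discarded, each a non-negative contribution) and $-\Re\frac{\zeta'}{\zeta}(\sigma) \le (\sigma-1)^{-1} + A$. Feeding these into $3 + 4\cos\varphi + \cos2\varphi \ge 0$, applied to $-\Re$ of $3\frac{\zeta'}{\zeta}(\sigma) + 4\frac{L'}{L}(\sigma+i\gamma,\chi) + \frac{L'}{L}(\sigma+2i\gamma,\chi^2)$, gives: unless $\rho$ is a \emph{real} zero of a \emph{real} character, $\beta < 1 - c_0/\log(\ell^3(|\gamma|+3))$ (if $\chi$ is complex the $\chi^2$-factor is a genuine $L$-function; if $\chi$ is real with $\gamma \ne 0$ it is $\zeta$, whose extra pole at $\sigma + 2i\gamma$ only helps). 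Likewise, for two \emph{distinct} real primitive characters $\chi_1 \bmod \ell_1$, $\chi_2 \bmod \ell_2$, the Dirichlet series of $\zeta(s) L(s,\chi_1) L(s,\chi_2) L(s,\chi_1\chi_2)$ has non-negative coefficients and a simple pole at $s=1$, forcing $\min(\beta_1,\beta_2) < 1 - c_0/\log(\ell_1\ell_2)$. As $\ell, \ell_i \le T$ and $\log(|\gamma|+3) \le \log Y + O(1) = o(\log T)$ in our region, all these confine the zero to $\Re s < 1 - c_0/(4\log T)$: the classical width. Since $1/\log P$ can dwarf $1/\log T$, this is not yet enough.

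\emph{Widening the region.} Suppose $L(s,\chi_1)$, $\chi_1$ primitive of modulus $\ell_1 \le T$ with $\gp{\ell_1}\le P$, has a zero $\rho_1 = \beta_1 + i\gamma_1$ with $\beta_1 > 1 - c/\log P$ and $|\gamma_1| \le Y$; take $c \le \min(c_0/4, 1)$, so by the previous paragraph $\rho_1$ is a real zero of a real character, and put $\lambda_1 \defeq 1 - \beta_1 < 1/\log P$, whence $\log(1/\lambda_1) > \log_2 P$. The explicit Deuring--Heilbronn inequality now says that for every primitive $\chi \ne \chi_1$ of modulus $\le T$ and every zero $\rho = \beta + i\gamma$ of $L(s,\chi)$ one has $1 - \beta \ge c_1\log(1/\lambda_1)/\log(T^2(|\gamma|+3))$. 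For $|\gamma| \le Y$ the denominator is at most $2\log T + \log Y + O(1) \le 3\log T$; hence, using $\log P \cdot \log_2 P \ge \tfrac12\log T$, within the strip $|\Im s| \le Y$ the resulting zero-free region for $L(s,\chi)$ contains $\{\Re s > 1 - (c_1/6)/\log P\}$, and in particular the region of the Lemma once we also require $c \le c_1/6$. Thus no character but $\chi_1$ has a zero there --- ``at most one''. Applying the same repulsion with $\chi = \bar\chi_1$ (which would vanish at $\overline{\rho_1} = \rho_1$) forces $\chi_1 = \bar\chi_1$, i.e.\ real; applying it to a putative second zero of $L(s,\chi_1)$ at $\rho_1$, counted with multiplicity, forces $\rho_1$ simple. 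The sole purpose of the cutoff $|\Im s| \le Y$ is to keep every $\log(|\gamma|+3)$ above negligible against $\log T$.

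\emph{The prime-factor bound and the main obstacle.} The bound $\log\ell \gg \log_2 T$ in \eqref{eq:4.2} drops out of the \emph{effective} real-zero-free region $\Re s > 1 - c_3/(\sqrt\ell\,(\log(2\ell))^2)$: a zero with $1 - \beta < c/\log P$ forces $\sqrt\ell\,(\log(2\ell))^2 \gg \log P \gg \log T/\log_2 T$, so $\ell \gg (\log P)^{2-o(1)}$ and $\log\ell \gg \log_2 P \gg \log_2 T$. The remaining assertion $\gp{\ell} \gg \log\ell$ is the delicate one: the exceptional character is a Kronecker symbol, so $\ell$ is essentially a fundamental discriminant, and one must rule out its being built only from small primes. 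I would combine the one-exceptional-zero explicit formula $\psi(x,\chi_1) = -x^{\beta_1}/\beta_1 + O(x\exp(-c\sqrt{\log x}))$ (valid for $\log x$ up to a power of $\log\ell$), which pins down $\chi_1(p)$ for all $p$ up to roughly $1/\lambda_1 \gg \log P$, with the repulsion of the previous step applied \emph{inside the smooth family}: twisting $\chi_1$ by quadratic characters to the primes $q$ with $\gp{\ell_1} < q \le \log\ell_1$ yields many further real primitive characters of $P$-smooth conductor $\le T$, all of whose $L$-functions $\rho_1$ repels from the line, and an overly smooth $\ell_1$ produces too many of them to fit the budget of one exceptional zero. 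The genuine difficulties are thus (i) establishing the Deuring--Heilbronn estimate in the shape used above, with the exceptional modulus entering only through $\log T$, and verifying that $P \ge T^{1/\log_2 T}$ is precisely the calibration that absorbs it; and (ii) the bound $\gp{\ell} \gg \log\ell$, which I expect to be the fiddliest point.
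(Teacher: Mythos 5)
The first point to make is structural: the paper does not prove Lemma~\ref{lem:4.1} at all. It says explicitly ``We quote \cite[Lemma 4.1]{BFM}'' and uses the result as a black box. So there is no in-paper proof to compare against; the comparison would have to be with the proof given in the cited preprint of Banks, Freiberg and Maynard, which this paper only references.

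Evaluating your proposal on its own terms: the outline for the ``at most one exceptional character'' claim, the realness and the simplicity of the putative zero --- Landau--Page via the $3+4\cos\varphi+\cos 2\varphi$ positivity to force the exception to be a simple real zero of a real character, then Deuring--Heilbronn repulsion calibrated so that $\log P\cdot\log_2 P\gg\log T$ widens the region from $c/\log T$ to $c/\log P$, with the height cutoff $\exp(\log P/\sqrt{\log T})$ chosen so that $\log(|\gamma|+3)=o(\log T)$ never dominates --- is the right strategy, and your bookkeeping of the exponents checks out. Your derivation of $\log\ell\gg\log_2 T$ from the effective bound $1-\beta_1\gg\ell^{-1/2}(\log 2\ell)^{-2}$ is also sound.

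The genuine gap is the claim $\gp{\ell}\gg\log\ell$, which you flag as ``the fiddliest point'' and attempt via an explicit-formula-plus-twisting argument that is both heavy and, as sketched, not a proof (there is no clear contradiction extracted from ``too many repelled characters''). You have actually written the key observation yourself without using it: the exceptional character is a Kronecker symbol, so $\ell$ is the absolute value of a fundamental discriminant, hence $\ell$ is squarefree or $4$ times a squarefree number. If every prime factor of a squarefree $\ell$ is at most $z$, then $\ell$ divides $\prod_{p\le z}p=\e^{\theta(z)}$, and Chebyshev's bound $\theta(z)\ll z$ gives $\log\ell\ll z=\gp{\ell}$ directly. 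That one line replaces the entire paragraph about twisting by small primes; no analytic input beyond Chebyshev is needed once realness (hence primitivity of a quadratic character) has been established. As written, your proof of \eqref{eq:4.2} does not go through, but it is rescued by this elementary remark.

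A small secondary point: the lemma does not assume $P\le T$, only $P\ge T^{1/\log_2 T}$; you silently add $P\le T$ at the start. In the paper's application $P$ is always $T^{1/\log_2 T}\le T$, so this costs nothing there, but a proof of the lemma as stated should either handle $P>T$ (where the width $c/\log P$ shrinks but the height grows) or state the restriction.
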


\begin{definition}
 \label{def:4.2}
For $T \ge 3$, let $\ell(T) \defeq \ell$ if the ``exceptional'' 
character $\chi \bmod \ell$, as described in 
Lemma \ref{lem:4.1}, exists; let $\ell(T) \defeq 1$ 
otherwise.
\end{definition}

\begin{proof}[Proof of Lemma \ref{lem:3.2}]
Fix a positive integer $k$.
Let $c$ be the constant of Lemma \ref{lem:4.1} and let 
$\eta \defeq c/(500k^2)$.
Let $x$ be a large real number, and let $g$ be a positive integer 
that divides $\prod_{p \le \log x^{\eta}} p$.
Note that, by Chebyshev's bounds, $g < x^{1/22}$, 
say.
(We may assume that $c$ is small.)
Let $B \defeq \ell(x^{2\eta})$, as in Definition \ref{def:4.2},
and suppose $(g,B) = 1$.
Let $h$ be an integer satisfying $1 \le h \le x$.
Suppose $(g,h) = 1$ and let $L$ denote the linear function given 
by $L(n) \defeq gn + h$.
Let $\cI_{L}(x) \defeq [gx + h,2gx + h)$.
Let $q$ denote a positive integer and $a$ an integer for which 
$(L(a),q) = 1$, noting that this implies $(L(a),gq) = 1$.
We have
\[
 \sums[n \in [x,2x)][n \equiv a \pod{q}]
  \ind{\PP}(L(n))
 =
  \sums[n \in \cI_{L}(x)][n \equiv L(a) \pod{gq}]
   \ind{\PP}(n)
 =
  \frac{1}{\phi(gq)}
   \sum_{n \in \cI_L(x)}
    \ind{\PP}(n)
    +
     \Delta_L(x;q,a), 
\]
where 
\[
 \Delta_L(x;q,a)
  \defeq 
   \sums[n \in \cI_{L}(x)][n \equiv L(a) \pod{gq}]
    \ind{\PP}(n) 
   -
  \frac{1}{\phi(gq)}
   \sum_{n \in \cI_L(x)}
    \ind{\PP}(n).    
\]
We will show that if $x$ is large enough in terms of $k$, then 
\begin{equation}
 \label{eq:4.3}
  \sums[q \le x^{1/8}][(q,B) = 1]
   \max_{(L(a),q) = 1}
    |\Delta_L(x;q,a)|
     \ll
      \frac{gx}{\phi(g)(\log x)^{2 + 100k^2}}.
\end{equation}

Let us show how this implies Lemma \ref{lem:3.2}.
First, $|\Delta_L(x;1,1)|$ is certainly majorized by the 
left-hand side of \eqref{eq:4.3}, so
\begin{equation}
 \label{eq:4.4}
  \sum_{n \in [x,2x)}
   \ind{\PP}(L(n))
  =
   \frac{1}{\phi(g)}
    \sum_{n \in \cI_L(x)}
     \ind{\PP}(n)
      + 
       \Delta_L(x;1,1)
  =
    \frac{gx\br{1 + O\br{1/\log x}}}{\phi(g)\log(gx)}
\end{equation}
by \eqref{eq:4.3} and the prime number theorem.
This gives rise to inequality \eqref{eq:3.4} (once $x$ is 
large enough in terms of $k$), for either $B = 1$ or, by 
\eqref{eq:4.2}, $B$ is a prime satisfying 
$B \gg \log_2 x^{\eta}$ (hence $\phi(B)/B \ge 1 - o(1)$), and 
we've already noted that $g \le x^{1/22}$ (hence 
$\log(gx) \le \frac{23}{22}\log x$).
Second, we verify that 
\begin{equation}
 \label{eq:4.5}
 \sums[n \in [x,2x)][n \equiv a \pod{q}]
  \ind{\PP}(L(n))
 -
   \frac{1}{\phi_L(q)}
    \sum_{n \in [x,2x)}
     \ind{\PP}(L(n))
    =
     \Delta_L(x;q,a)
    - 
     \frac{\phi(g)}{\phi(gq)}\Delta_L(x;1,1). 
\end{equation}
Third, again using \eqref{eq:4.3} to bound $|\Delta_L(x;1,1)|$, 
then using $\phi(gq) \ge \phi(g)\phi(q)$ and 
$\sum_{q \le x} 1/\phi(q) \ll \log x$, we obtain
\begin{equation}
 \label{eq:4.6}
 \sums[q \le x^{1/8}][(q,B) = 1]
  \frac{\phi(g)}{\phi(gq)}|\Delta_L(x;1,1)|
   \ll
    \frac{gx}{(\log x)^{2 + 100k^2}}
     \hspace{-4pt}
      \sum_{q \le x^{1/8}}
       \frac{1}{\phi(gq)}
        \ll
         \frac{gx}{\phi(g)(\log x)^{1 + 100k^2}}.
\end{equation}
Fourth, we combine \eqref{eq:4.3}, \eqref{eq:4.5} and 
\eqref{eq:4.6} (applying the triangle inequality to the 
right-hand side of \eqref{eq:4.5}), obtaining 
\begin{equation}
 \label{eq:4.7}
  \sums[q \le x^{1/8}][(q,B) = 1]
   \max_{(L(a),q) = 1}
    \bigg|
      \sums[n \in [x,2x)][n \equiv a \pod{q}]
       \hspace{-1pt}
        \ind{\PP}(L(n))
       -
         \frac{1}{\phi_L(q)}
          \sum_{n \in [x,2x)}
           \hspace{-1pt}
            \ind{\PP}(L(n))
    \bigg|
     \ll
      \frac{gx}{\phi(g)(\log x)^{1 + 100k^2}}.
\end{equation}
Finally, combining \eqref{eq:4.4} with \eqref{eq:4.7} 
yields \eqref{eq:3.5}.

We now establish \eqref{eq:4.3} by paraphrasing the proof of 
\cite[Theorem 4.2]{BFM}.
Suppose $1 \le q \le x^{1/8}$.
By orthogonality of Dirichlet characters we have 
\[
 \sums[n \in [x,2x)][n \equiv a \pod{q}]
  \ind{\PP}(L(n))
 =
  \sums[n \in \cI_{L}(x)][n \equiv L(a) \pod{gq}]
   \ind{\PP}(n)
 =
  \frac{1}{\phi(gq)}
   \sum_{\chi \bmod gq}
    \bar{\chi}(L(a))
     \sum_{n \in \cI_{L}(x)}
      \chi(n)\ind{\PP}(n).
\]
Letting $\chi^*$ denote the primitive character that induces 
$\chi$, we have (for characters $\chi$ to the modulus $gq$), 
\[
 \bigg|
  \sum_{n \in \cI_{L}(x)} 
  (\chi(n) - \chi(n))\ind{\PP}(n)
 \bigg|
   \le 
    \sums[n \in \cI_{L}(x)][(n,gq) > 1]
     \ind{\PP}(n)
      \le 
       \sum_{p \mid gq} 1
        \ll
         \log(gq)
          \ll
           \log x,
\]
whence 
\[
 \sums[n \in [x,2x)][n \equiv a \pod{q}]
  \ind{\PP}(L(n))
  =
  \frac{1}{\phi(gq)}
   \sum_{\chi \bmod gq}
    \bar{\chi}(L(a))
     \sum_{n \in \cI_{L}(x)}
      \chi^*(n)\ind{\PP}(n)
     + O(\log x).
\]
For the principal character $\chi_0 \bmod gq$ we have 
$\chi_0^* \equiv 1$, and so we deduce that
\begin{equation}
 \label{eq:4.8}
  \max_{(L(a),q) = 1}|\Delta_L(x;q,a)|
   \le
    \frac{1}{\phi(gq)}
     \sums[\chi \bmod gq][\chi \ne \chi_0]
      \hspace{3pt}
       \bigg|
        \sum_{n \in \cI_{L}(x)}
         \chi^*(n)\ind{\PP}(n)
       \bigg|
        + O(\log x)     
\end{equation}

It follows from the explicit formula \cite[\S19, (13)--(14)]{DAV} 
that, for nonprincipal characters $\chi \bmod gq$, 
$2 \le T \le N$ and $T \asymp \sqrt{N}$, with $\Lambda$ denoting 
the von Mangoldt function,  
\[
 \bigg|
  \sum_{n \le N} \chi(n)\Lambda(n) 
 \bigg|
  \ll 
   \sum_{|\rho| < \sqrt{N}}
    \frac{N^{\Re{(\rho)}}}{|\rho|}
     +
      \sqrt{N}(\log (gqN))^2,
\]
where the sum is over nontrivial zeros of $L(s,\chi)$ having real 
part at least $1/2$.
Since 
$
 \big|\sum_{n \le N} \chi(n)\Lambda(n)(\ind{\PP}(n) - 1)\big|
  \le 
   \sqrt{N}\log N
$,
the same bound holds if $\Lambda$ is replaced by 
$\Lambda\ind{\PP}$ and, via partial summation, $\ind{\PP}$.
Thus, since $gq \ll x^{O(1)}$ and 
$\sqrt{gx} \asymp \sqrt{gx + h}$, 
\[
  \bigg|
   \sum_{n \in \cI_{L}(x)} \chi(n)\ind{\PP}(n)
  \bigg|
   \ll
   \sum_{|\rho| < \sqrt{gx}}
    \frac{(gx)^{\Re{(\rho)}}}{|\rho|}
     +
      \sqrt{gx}(\log x)^2.    
\]
Combining this with \eqref{eq:4.8} gives   
\begin{align}
 \label{eq:4.9} 
  \begin{split}
 \max_{(L(a),q) = 1}|\Delta_{L}(x;q,a)|
 & 
  \ll
    \frac{1}{\phi(gq)}
     \sums[\chi \bmod gq][\chi \ne \chi_0]
      \hspace{4pt}
       \sumss[*][|\rho| < \sqrt{gx}]
        \frac{(gx)^{\Re(\rho)}}{|\rho|}
       + 
          \sqrt{gx}(\log x)^2
 \\
 & 
  = 
    \frac{1}{\phi(gq)}
     \sum_{d \mid gq}
      \hspace{4pt}    
       \sumss[*][\chi \bmod d]
        \hspace{3pt}
         \sum_{|\rho| < \sqrt{gx}}
          \frac{(gx)^{\Re(\rho)}}{|\rho|}
        + 
           \sqrt{gx}(\log x)^2,  
  \end{split}
\end{align}
where, in the first line, $\sumsstxt[*]$ denotes summation over 
nontrivial zeros of $L(s,\chi^*)$ having real part at least 
$1/2$, $\chi^*$ being the primitive character that induces $\chi$, 
and in the second line, $\sumsstxt[*]$ denotes summation 
over primitive characters and the innermost sum is over 
nontrivial zeros of $L(s,\chi)$ having real part at least $1/2$.

Applying \eqref{eq:4.9} and changing order of summation, 
recalling that $(g,B) = 1$, we find that 
\begin{align*}
 &
  \sums[q \le x^{1/8}][(q,B) = 1]
   \max_{(L(a),q) = 1}|\Delta_{L}(x;q,a)|
 \\
 & \hspace{30pt}
   \ll
    \sums[d \le gx^{1/8}][(d,B) = 1]
     \hspace{4pt}
      \sumss[*][\chi \bmod d]
       \hspace{3pt}
        \sum_{|\rho| < \sqrt{gx}}
         \frac{(gx)^{\Re(\rho)}}{|\rho|}
          \sums[q \le x^{1/8}][(q,B) = 1][d \mid gq]
           \frac{1}{\phi(gq)}
   +
     x^{1/8}\sqrt{gx}(\log x)^2.
\end{align*}
Writing $d = ab$ with $a = (d,g)$, for $d \mid gq$ we have 
$gq = gbc$ for some integer $c$. 
Note that $(b,g)$ divides $a$.
We have $\phi(gq) = \phi(gbc) \ge \phi(g)\phi(b)\phi(c)$, 
and as $\sum_{c \le x^{1/8}} 1/\phi(c) \ll \log x$, it 
follows that 
\begin{align}
 \label{eq:4.10}
  \begin{split}
 & 
  \sums[q \le x^{1/8}][(q,B) = 1]
   \max_{(L(a),q) = 1}|\Delta_{L}(x;q,a)|
 \\
  & \hspace{30pt}
    \ll
     \frac{\log x}{\phi(g)}
      \sum_{a \mid g}
       \sums[b \le gx^{1/8}/a][(b,g) \mid a][(b,B) = 1]
        \frac{1}{\phi(b)}
         \hspace{4pt}
          \sumss[*][\chi \bmod ab]
           \hspace{3pt}
            \sum_{|\rho| < \sqrt{gx}}
             \frac{(gx)^{\Re(\rho)}}{|\rho|}
   +
     x^{1/8}\sqrt{gx}(\log x)^2.
  \end{split}
\end{align}

If $a \mid g$ and $b \le gx^{1/8}/a$, then 
$a \in [R,2R)$ and 
$b \in [S,2S)$ for some pair $(R,S)$ of powers of $2$
satisfying $1 \le R < g$ and $1 \le RS < gx^{1/8}$.
The number of such pairs is $O((\log x)^2)$.
Note that for $b \in [S,2S)$ and $S < gx^{1/8}$ we have
$1/\phi(b) \ll (\log_2 b)/b \ll (\log x)/S$.
If $|\rho| < \sqrt{gx}$ and $1/2 \le \Re(\rho) \le 1$ then  
\[
 \Re(\rho)
  \in 
   I_m
    \defeq 
     [1/2 + m/\log(gx),1/2 + (m + 1)/\log(gx))  
\]
for some integer $m$ satisfying 
$0 \le m < \frac{1}{2}\log(gx)$, and
\[
 |\Im(\rho)|
  \in 
   J_n
    \defeq 
     [n - 1, 2n - 1) 
\]
with $n$ being some power of $2$ satisfying 
$1 \le n < \sqrt{gx}$.
The number of such pairs $(m,n)$ is $O((\log x)^2)$. 
Note that for $\rho$ with $\Re(\rho) \in I_m$ and 
$|\Im(\rho)| \in J_n$ we have 
$(gx)^{\Re(\rho)}/|\rho| \ll \sqrt{gx}\,\e^{m}/n$.
Thus, 
\begin{align}
 \label{eq:4.11}
  \begin{split}
 &  
  \frac{\log x}{\phi(g)}
   \sum_{a \mid g}
    \sums[b \le gx^{1/8}/a][(b,g) \mid a][(b,B) = 1]
     \frac{1}{\phi(b)}
      \hspace{4pt}
       \sumss[*][\chi \bmod ab]
        \hspace{3pt}
         \sum_{|\rho| < \sqrt{gx}}
          \frac{(gx)^{\Re(\rho)}}{|\rho|}
 \\
 & \hspace{30pt}
  \ll
   \frac{\sqrt{gx}(\log x)^6}{\phi(g)}
    \sup_{\substack{R < g, \, RS < gx^{1/8} \\ 2m < \log(gx) \\ n < \sqrt{gx}}}
     \frac{\e^{-m}}{nS}N^*\Big(R,S,1/2 + m/\log(gx),n - 1\Big), 
 \end{split}
\end{align}
where
\begin{equation}
 \label{eq:4.12}
 N^*(R,S,\sigma,T)
  \defeq 
   \sums[R \le a < 2R][a \mid g]
    \sums[S \le b < 2S][(b,g) \mid a][(b,B) = 1]
     \hspace{4pt}
      \sumss[*][\chi \bmod ab]
       \hspace{3pt}
        \sums[\Re(\rho) \ge \sigma][|\Im(\rho)| \le T] 1,
\end{equation}
and the innermost sum is over zeros $\rho$ of $L(s,\chi)$ in the 
given region. 

Note that, as $g$ is squarefree, every positive integer may be 
decomposed uniquely as a product $ab$ of positive integers 
$a \mid g$ and $(b,g) \mid a$, whence
\[
 N^*(R,S,\sigma,T)
  \le 
   \sum_{d < 4RS}
    \hspace{4pt}
     \sumss[*][\chi \bmod d]
      \hspace{3pt}
       \sums[\Re(\rho) \ge \sigma][|\Im(\rho)| \le T] 1.
\]
A result \cite[Theorem 12.2]{MON} of Montgomery therefore 
implies that   
\begin{equation}
 \label{eq:4.13}
  N^*(R,S,\sigma,T)
  \ll
   ((RS)^2T)^{3(1 - \sigma)/(2 - \sigma)}
    (\log(RST))^{14} 
\end{equation}
for $T \ge 2$ and $1/2 \le \sigma \le 1$.
On the right-hand side of \eqref{eq:4.11}, we partition the 
supremum set according as (i)
$0 \le m \le \frac{1}{2}\log(gx) - (44 + 200k^2)\log_2(gx)$ 
or (ii)
$
 \frac{1}{2}\log(gx) - (44 + 200k^2)\log_2(gx) 
  < m 
   < \frac{1}{2}\log(gx)
$. 

For case (i), we note that for $1/2 \le \sigma \le 1$, the 
following inequalities hold:
$1/(2 - \sigma) \le 1$, 
$6(1 - \sigma)/(2 - \sigma) \le 1 + 2(1 - \sigma)$ and 
$3(1 - \sigma)/(2 - \sigma) \le 1$.
Thus, 
$(R^2)^{3(1 - \sigma)/(2 - \sigma)} \le (R^6)^{(1 - \sigma)}$, 
$(S^2)^{3(1 - \sigma)/(2 - \sigma)} \le S(S^2)^{(1 - \sigma)}$ 
and 
$T^{3(1 - \sigma)/(2 - \sigma)} \le T$.
\linebreak
Hence \eqref{eq:4.13} implies 
$N^*(R,S,\sigma,T) \ll (R^6S^2)^{1 - \sigma}ST(\log(RST))^{14}$.
Recalling that $g < x^{1/22}$, note that if 
$R < g$ and $RS < gx^{1/8}$ then $R^6S^2 < \sqrt{gx}$.
%
It follows that  
\begin{equation}
 \label{eq:4.14}
  \frac{\e^{m}}{nS}
   N^*\Big(R,S,1/2 + m/\log(gx),n - 1\Big)
   \ll
    \e^{m/2}
     (gx)^{1/4}
     (\log x)^{14}
      \ll
       \frac{\sqrt{gx}}{(\log x)^{8 + 100k^2}}.
\end{equation}

We divide case (ii) into two sub-cases: 
(iia) $n^{3/4}S^{1/2} \ge (\log(gx))^{22 + 100k^2}$ or
(iib) $n^{3/4}S^{1/2} < (\log(gx))^{22 + 100k^2}$.
For (iia), we note that if $\sigma = 1/2 + m/\log(gx)$ 
then $\sigma \ge 10/11$ (provided $x$ is large enough in terms of 
$k$, as we assume), hence $3(1 - \sigma)/(2 - \sigma) \le 1/4$.
We have 
$
 (R^2)^{3(1 - \sigma)/(2 - \sigma)} 
  \le (R^6)^{1 - \sigma} 
   < (gx)^{(1 - \sigma)/2}
$ 
(for $R < g$) as before, and 
$(S^2T)^{3(1 - \sigma)/(2 - \sigma)} \le S^{1/2}T^{1/4}$.
By \eqref{eq:4.13} we therefore have   
\begin{equation}
 \label{eq:4.15}
  \frac{\e^{m}}{nS}
   N^*\Big(R,S,1/2 + m/\log(gx),n - 1\Big)
   \ll
    \frac{\e^{m/2}(gx)^{1/4}(\log x)^{14}}{n^{3/4}S^{1/2}}
     \ll
      \frac{\sqrt{gx}}{(\log x)^{8 + 100k^2}}.
\end{equation}

For (iib) we apply Lemma \ref{lem:4.1} to the right-hand 
side of \eqref{eq:4.12}.
Note that in this case we have $S < (\log(gx))^{44 + 200k^2}$.
Also note that, since $g \mid \prod_{p \le \log x^{\eta}} p$, 
we have $g \le x^{(1 + o(1))\eta}$ by the prime number theorem.
Recall that $\eta = c/(500k^2)$.
Thus, for $a \mid g$ and $b < 2S$ we have $ab < x^{2\eta}$ 
and $\gp{ab} < x^{2\eta/\log_2 x^{2\eta}}$ (if $x$ is large 
enough in terms of $k$, as we assume).
We find that 
\[
 1 - c\frac{\log_2 x^{2\eta}}{\log x^{2\eta}}
  <
   \frac{1}{2} + \frac{m}{\log(gx)}
    \quad 
     \text{and}
      \quad 
        n < \exp\br{\sqrt{\log x^{2\eta}}/\log_2 x^{2\eta}}
\]
when $m > \frac{1}{2}\log(gx) - (44 + 200k^2)\log_2(gx)$ and 
$n^{3/4} < (\log(gx))^{22 + 100k^2}$.
Therefore, in case (iib), $N^*(R,S,\sigma,T)$ is at most the 
number of zeros of $L(s,\chi)$ for primitive characters 
$\chi \bmod ab$, $ab < x^{2\eta} \eqdef T$, 
$\gp{ab} < T^{1/\log_2 T} \eqdef P$, 
\[
 \Re(s)
  \ge 
   1 - c/\log P,
    \quad 
     |\Im(s)|
      \le 
       \exp\br{\log P/\sqrt{\log T}}
\]
and $(ab,B) = 1$ (recall that $a \mid g$ and 
$(g,B) = (b,B) = 1$).
But by Lemma \ref{lem:4.1} and our choice of $B$, there are no 
such zeros, i.e.\
\begin{equation}
 \label{eq:4.16}
   N^*\Big(R,S,1/2 + m/\log(gx),n - 1\Big) = 0.
\end{equation}
Combining 
\eqref{eq:4.10}, 
\eqref{eq:4.11},
\eqref{eq:4.14},
\eqref{eq:4.15} 
and
\eqref{eq:4.16}  
gives \eqref{eq:4.3}.
\end{proof}


\section{Concluding remarks}
 \label{sec:5}

In \S\ref{sec:3}, we did not quote (the special case of) 
Maynard's theorem \cite[Theorem 3.1]{MAY2} in its entirety. 
It continues as follows.%
\footnote{%
We are not quoting \cite[Theorem 3.1]{MAY2} exactly here.
Rather, we are inspecting its proof \cite[(6.13) et seq.]{MAY2}.
}
If, moreover, $k \le (\log x)^{1/5}$ and 
all $L \in \cL$ have the form $gn + h_i$ with 
$|h_i| \le \upsilon\log x \ll (\log x)/(k\log k)$ and $g \ll 1$, 
then 
\begin{equation}
 \label{eq:5.1}
 \#
  \Br{
       n \in [x,2x) : 
       \#(\{L_1(n),\ldots,L_k(n)\} \cap \PP) 
           \ge C^{-1}\delta\log k
     }
   \gg
    \frac{x}{\e^{Ck^2}(\log x)^k},
\end{equation}
where on the left-hand side, 
$
 [gn,gn + \upsilon\log x] \cap \PP 
  = \{L_1(n),\ldots,L_k(n)\} \cap \PP
$.
This would lead to an improvement of Theorem \ref{thm:1.1} prima 
facie only for certain $\lambda$ and $m$ --- note here that 
$\upsilon \ll 1/(k\log k)$, so if $\upsilon \gg \lambda$ and 
$C^{-1}\delta \log k \ge m$, there is an interdependence 
between $\lambda$, $m$ and $\delta$, viz.\ 
$\lambda Cm\delta^{-1}\e^{Cm\delta^{-1}} \ll 1$.

Perhaps the right-hand side of \eqref{eq:1.3} can be improved to 
something of a quality similar to that of \eqref{eq:5.1}, 
for all $\lambda$ and $m$, via a less ad hoc proof, i.e.\ a proof 
that uses Maynard's sieve alone, and does not involve the 
Erd{\H o}s--Rankin construction.


\end{document}